\newtheorem{thm}{Theorem}
\newtheorem{lem}{Lemma}
\newtheorem{prop}{Proposition}
\def\E{\mathbb{E}}
\def\S{\mathbf{S}}
\def\half{\frac{1}{2}}
\def\d{\mathrm{d}}
\def\T{\mathbb{T}}
\title{$6$-th Norm of a Steinhaus Chaos}
\author{Kamalakshya Mahatab}
\address{Kamalakshya Mahatab, Department of Mathematical Sciences, NTNU Trondheim, Norway}
\email{accessing.infinity@gmail.com, kamalakshya.mahatab@ntnu.no}
\subjclass[2010]{11N60}
\thanks{This work was carried out during the tenure of an ERCIM ``Alain Bensoussan'' Fellowship 
of the author. The author is also supported by Grant 227768 of the Research Council of Norway}
\begin{document}

\begin{abstract}
We prove that for the Steinhaus Random Variable $z(n)$,
$$\E\left(\left|\sum_{n\in E_{N, m}}z(n)\right|^6\right)\asymp
|E_{N, m}|^3 \text{ for } m\ll(\log\log N)^{\frac{1}{3}},$$
where 
\[E_{N, m}:=\{1\leq n:\Omega(n)=m\}\]
 and $\Omega(n)$ denotes the number of prime factors of $N$.
\end{abstract}

\maketitle

\section{Introduction}\label{sec:intro}
Let $(z(p))_{p \ \text{prime}}$ be the Steinhaus random variable, equidistributed on the unit circle $\mathbb{T}:=\{s\in \mathbb{C}: |s|=1\}$.
This function can be extended to all natural numbers by defining it completely multiplicatively. We define
\begin{align*}
 S_N(z)&:=\sum_{1\leq n\leq N}z(n) \text{ and }\\
 S_{N, m}(z)&:=\sum_{n\in E_{N, m}}z(n), \text{ where }\\
 E_{N, m}&:=\{1\leq n:\Omega(n)=m\}.
\end{align*}
Expectations of such Steinhaus chaoses, $S_N$ and $S_{N, m}$, received attentions from several mathematicians in recent years due to its connections to number theory and
harmonic analysis \cite{MR3589675}. The expectations of $S_N$ and $S_{N, m}$ are defined as follows
\[\E(|S_N|^q):=\int_{\T^\infty}|S_N(z)|^q\d \mu_\infty(z) \text{ and }\E(|S_{N, m}|^q):=\int_{\T^\infty}|S_{N, m}(z)|^q\d \mu_\infty(z),\]
where $z$ denotes the coordinate tuple $(z(p_j))$, $p_j$ being the $j$th prime, and $\mu_\infty$ is the normalized Haar measure on infinite dimensional torus $\T^\infty$. By Bohr correspondence 
we have the following well known identity for all $q>0$ (see \cite[Section 3]{MR2506825})
\[\E(|S_N|^q)=\lim_{T\rightarrow\infty}\frac{1}{T}\int_0^T\left|\sum_{n=1}^{N}n^{-it}\right|^q\d t \text{ and }\E(|S_{N, m}|)=\lim_{T\rightarrow\infty}\frac{1}{T}\int_0^T\left|\sum_{n\in E_{N, m}}n^{-it}\right|^q\d t.\]
In \cite{MR2640082}, Helson observed that if 
\begin{equation}\label{eq:helson}
 \E(|S_N|)=o(\sqrt N)
\end{equation}
then Nehari's theorem on boundedness of Hankel forms does not extend to $\mathbb{T}^\infty$. 
While Nehari's theorem has shown to fail on $\mathbb{T}^\infty$ by means of another Dirichlet polynomial\cite{MR3589675}, 
the question of whether (\ref{eq:helson}) holds remained open and
was proved only recently by Harper \cite{harper}. In an interesting approach to obtain a lower bound for $\E(|S_N|)$, Bondarenko and Seip \cite{MR3430378} showed that
\[\|S_{N, m}\|_2\asymp \|S_{N, m}\|_4 \ \text{ for } m<\beta\log\log N,\]
where $\beta<\half$ and $\|S_{N, m}\|_q:=\E(|S_N|^q)^{1/q}$. This implies
\[\E(|S_N|)\gg\frac{\sqrt N}{(\log\log N)^{0.05616}} \text{ and } |S_{N, m}\|_q\gg_q \frac{\sqrt N}{(\log\log N)^{0.07672}} \text{ for }q>0.\]
In this article, we will investigate the following question:
~\\

\textsl{Is there a constant $c(k)$, for each $k$, such that $\|S_{N, m}\|_{2k}\asymp \|S_{N, m}\|_2$} when 

$m<\beta\log\log N$ and $\beta<c(k)$?

~\\
We conjecture that such a constant exists for each $k$, but proving this statement seems difficult. Instead we will show the following:
\begin{thm}\label{thm:main}
 For $m\ll(\log\log N)^{\frac{1}{3}}$,
 \[\|S_{N, m}\|_6\asymp\|S_{N, m}\|_4 \ \text{ as } N\rightarrow\infty.\]
\end{thm}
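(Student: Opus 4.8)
The plan is to pass to a counting problem, peel off the diagonal, and control the off-diagonal via sharp estimates for $|E_{x,k}|:=\#\{n\le x:\Omega(n)=k\}$. Since the $z(p)$ are i.i.d.\ uniform on $\T$ and $z$ is completely multiplicative, $\E\bigl(z(a)\overline{z(b)}\bigr)=\mathbf 1_{\{a=b\}}$, whence
\[
\E\bigl(|S_{N,m}|^{6}\bigr)=\#\bigl\{(n_1,n_2,n_3,\mu_1,\mu_2,\mu_3)\in E_{N,m}^{6}:\ n_1n_2n_3=\mu_1\mu_2\mu_3\bigr\}=:T_N ,
\]
while $\E(|S_{N,m}|^{2})=|E_{N,m}|$. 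Monotonicity of $L^q(\T^\infty,\mu_\infty)$‑norms gives $\|S_{N,m}\|_6\ge\|S_{N,m}\|_2=|E_{N,m}|^{1/2}$, i.e.\ $T_N\ge|E_{N,m}|^3$. Since $m\ll(\log\log N)^{1/3}=o(\log\log N)$, the Bondarenko--Seip estimate applies and yields $\|S_{N,m}\|_4\asymp\|S_{N,m}\|_2=|E_{N,m}|^{1/2}$; so Theorem~\ref{thm:main} is equivalent to the single inequality $T_N\ll|E_{N,m}|^{3}$, which is what we prove.

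To bound $T_N$ we parametrise the solutions of $n_1n_2n_3=\mu_1\mu_2\mu_3$: distributing, prime by prime, the exponents of $L:=n_1n_2n_3$ over a $3\times3$ matrix with prescribed row sums $(v_p(n_i))_i$ and column sums $(v_p(\mu_j))_j$ — possible by Birkhoff--von Neumann — one sees every solution arises from at least one matrix $(d_{ij})\in\Z_{\ge1}^{3\times3}$ with $n_i=\prod_j d_{ij}$, $\mu_j=\prod_i d_{ij}$, so
\[
T_N\ \le\ \#\Bigl\{(d_{ij})\in\Z_{\ge1}^{3\times3}:\ \textstyle\prod_j d_{ij}\in E_{N,m}\ (i=1,2,3),\ \prod_i d_{ij}\in E_{N,m}\ (j=1,2,3)\Bigr\}.
\]
Stratify by the exponent pattern $(a_{ij}):=(\Omega(d_{ij}))$, a nonnegative integer matrix whose row and column sums all equal $m$ — a $3\times3$ ``magic square''. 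Their number is $H_3(m)$, the Ehrhart polynomial of the Birkhoff polytope $B_3$, which has degree $\dim B_3=4$; so there are $\ll m^{4}$ of them. Exactly six, the scaled permutation matrices $mP_\sigma$, force every off‑diagonal $d_{ij}$ to equal $1$ and hence contribute $|E_{N,m}|^3$ each; these are precisely the ``diagonal'' patterns, where $(\mu_j)$ is a permutation of $(n_i)$.

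The heart of the matter is a uniform upper bound on $N(a_{ij})$, the number of matrices $(d_{ij})$ with $\Omega(d_{ij})=a_{ij}$ and all nine line‑products $\le N$, for non‑permutation patterns. The mechanism is a gcd‑iteration: any way in which $(a_{ij})$ deviates from a scaled permutation matrix produces at least two prime factors sitting in ``off‑diagonal'' entries $d_{ij}$ which, inside both a row‑product and a column‑product, compete with the ``diagonal'' factors and force those below $N/d_{ij}$; the Landau--Sathe--Selberg asymptotic $|E_{x,k}|\asymp\frac{x}{\log x}\frac{(\log\log x)^{k-1}}{(k-1)!}$, valid uniformly for $k\ll\log\log x$, converts this into a gain of order $d_{ij}^{-1}\cdot m/\log\log N$, and the Dirichlet sums $\sum_{\Omega(d)=a}(d\log d)^{-1}(\log\log d)^{a-1}$ that appear converge (the decisive $1/\log d$ being the local density of $E_{\cdot,a}$). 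Since a magic square differs from the nearest $mP_\sigma$ by a zero‑line‑sum integer matrix of $\ell^1$‑norm $\ge4$ unless it equals some $mP_\sigma$, this yields $N(a_{ij})\ll\bigl(m/\log\log N\bigr)^{2}|E_{N,m}|^3$ throughout our range (with further savings for ``more mixed'' patterns). Summing over the $\ll m^4$ non‑permutation patterns gives $T_N\ll|E_{N,m}|^3\bigl(1+m^6/(\log\log N)^2\bigr)\asymp|E_{N,m}|^3$, using $m^3\ll\log\log N$.

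The real obstacle is the uniform estimate for $N(a_{ij})$. For patterns close to a scaled permutation matrix — one perturbed inside a $2$‑ or $3$‑cycle — the off‑diagonal entries are small and the gcd‑iteration is routine; but for ``fully mixed'' magic squares, say with every entry $\approx m/3$, no entry is small, so one cannot fix the off‑diagonal part and count the rest, and one must instead exploit all three row and all three column size constraints simultaneously in order to beat down multinomial factors as large as $\approx 27^{m}$. Handling this, together with the degradation of the Sathe--Selberg asymptotic when $N/d_{ij}$ is not genuinely large, and the accounting over all $\ll m^4$ patterns, is exactly where the method loses — restricting it to $m\ll(\log\log N)^{1/3}$ rather than the conjectural $m\ll\log\log N$.
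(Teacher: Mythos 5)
Your reduction to the counting problem $T_N\ll|E_{N,m}|^3$, the parametrisation of solutions of $n_1n_2n_3=\mu_1\mu_2\mu_3$ by $3\times3$ matrices $(d_{ij})$, and the stratification by the $\ll m^4$ magic-square patterns $(a_{ij})=(\Omega(d_{ij}))$ are all sound, and the final bookkeeping ($m^4$ patterns times a gain of $(m/\log\log N)^2$ requires $m^3\ll\log\log N$) is at least consistent with the theorem's range. But the proof has a genuine gap at its self-declared ``heart'': the uniform bound $N(a_{ij})\ll(m/\log\log N)^2|E_{N,m}|^3$ for non-permutation patterns is asserted, not proved. The ``gcd-iteration'' is described only as a mechanism, and your own closing paragraph concedes that for fully mixed patterns (all $a_{ij}\approx m/3$) no entry can be fixed and summed out, that one must exploit all six line constraints simultaneously to beat multinomial factors of size $\approx 27^m$, and that the degradation of the Sathe--Selberg asymptotic when $N/d_{ij}$ is small is unhandled. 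A further concrete worry: the Dirichlet sums you invoke do not merely need to converge --- their values grow factorially in the pattern entries (sums of the type $\sum_{\Omega(d)=a}d^{-1}(\log d)^{-1}(\log\log d)^{a-1}$ produce factors like $(2a-2)!/(a-1)!\approx 4^a(a-1)!$, exactly the $2^{2k}$-type losses that the paper must fight in its Lemma~\ref{lem:Ebk/bb}), so uniformity in the pattern is essential and is nowhere established. As written, the argument is a strategy outline whose decisive estimate is missing.

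It is worth noting that the paper's proof is designed precisely to avoid the fully mixed case that defeats you. Rather than counting nine-fold factorizations directly, it uses the Bondarenko--Seip identity for $|S_{N,m}|^2$ to peel off one coordinate, and then bounds the remaining four-fold count $\S$ by Cauchy--Schwarz on $\T^\infty$ (Lemma~\ref{lem:formula_S}), which replaces the hard simultaneous count by the square root of a product of four quantities $|E_{x,k}|$, each estimated by Sathe's theorem; the split into $b\le\sqrt N$ and $b>\sqrt N$ (the sums $A_1$ and $A_2$, with Lemmas~\ref{lem:Ebk/bb} and~\ref{lem:sumENb}) is what controls the regime where the asymptotic for $|E_{x,k}|$ degrades. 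If you want to salvage your combinatorial route, you would need either a genuine substitute for that Cauchy--Schwarz step or a direct, pattern-uniform treatment of the mixed magic squares --- which, as you observe, is essentially the open difficulty that limits the method.
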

It is difficult to generalize our proof for $6$-th moment to other even moments as the computation becomes extremely complicated.
We may note that the constant in the above asymptotic is independent of $m$ and $N$.
Certain computations in the proof of the this theorem indicate us to conjecture that $c(3)=\frac{1}{4}$. 

We may compare the above result to a result of Hough \cite{MR2770059} on Rademacher 
random variable. Let $f$ denotes the Rademacher random variable defined on primes, and takes the values $\pm1$ with probability $\half$ each. Further, extend $f$ to
all natural numbers by defining it as a completely multiplicative function. Let
\[S_{N, m, f}:=\sum_{n\in E_{N, m}}f(n).\]
Then ( see Proposition~10 \cite{MR2770059})
 \[\E(|S_{N, m, f}|^{2k})\asymp|E_{N, m}|^k \text{ for } m=o(\log\log\log N).\]
Our theorem gives a better range for $m$ when $k=3$ in case of the Steinhaus random variable.
However, we can not compute higher moments of $S_{N, m}$ using Hough's technique, nor our technique can be applied to compute the $6$-th moment of $S_{N, m, f}$.

It is known that the distribution of $\frac{S_{N, m, f}}{\|S_{N, m}\|_2}$
is standard normal when $m$ is sufficiently small compare to $N$ (see \cite{MR2770059},\cite{Harper2}).
We may expect similar results for Steinhaus random variable, and conjecture that when $m=o(\log\log N)$, the distribution of $\frac{S_{N, m}}{\|S_{N, m}\|_2}$ should converge to a complex normal distribution and 
\[\lim_{N\rightarrow\infty}\frac{\|S_{N, m}\|_{2k}}{\|S_{N, m}\|_2}=\frac{(2k)!}{2^k(k!)}.\]

We will simplify $\E(|S_{N, m}|^{6})$ in Section~\ref{sec:identity} and prove some preparatory lemmas in Section~\ref{sec:lemm} and \ref{sec:S}. 
In Section~\ref{sec:proof} we will give a proof of Theorem~\ref{thm:main}.

\section{An Expression for $6^\text{th}$ Norm of $S_{N, m}$}\label{sec:identity}
\begin{prop}
We have the following expressions for $\|S_{N, m}\|_6^6$
 \begin{align}\label{eq:identity_6thmoment}
 &\|S_{N, m}\|^6_6 \leq\|S_{N, m}\|^4|E_{N, m}|\\
 \notag
 &+\sum_{k=1}^m  \sum_{\substack{a, b\in E_{N, k}\\ (a, b)=1}}\left|E_{\frac{N}{\max(a, b)},\ m-k}\right|
 \sum_{a=a_1'a_2'} \sum_{b=b_1'b_2'} \S\left(\frac{N}{a_i'}, \frac{N}{b_i'}, m-\Omega(a_i'), m-\Omega(b_i'): i=1, 2\right),
 \end{align}
where 
\[\S(N_i, N_i', m_i, m_i': i=1, 2)\ =\sum_{\substack{a_1a_2=b_1b_2\\ \Omega(a_i)=m_i, \Omega(b_i)=m_i' \\ a_i\le N_i, \ b_i\le N_i'}} 1.\]
\end{prop}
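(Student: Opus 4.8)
The plan is to expand $\|S_{N,m}\|_6^6 = \E(|S_{N,m}|^6)$ directly using complete multiplicativity of $z$ and the orthogonality relation $\E(z(r)\overline{z(s)}) = \mathbf{1}_{r=s}$ on $\T^\infty$. Writing $|S_{N,m}|^6 = S_{N,m}^3\overline{S_{N,m}}^3$, expanding gives
\begin{equation*}
\E(|S_{N,m}|^6) = \sum_{\substack{n_1n_2n_3 = n_4n_5n_6 \\ n_i \in E_{N,m}}} 1,
\end{equation*}
so the sixth norm counts multiplicative sextuples. First I would isolate the ``diagonal'' contribution where the multiset $\{n_1,n_2,n_3\}$ coincides with $\{n_4,n_5,n_6\}$; since $\E(|S_{N,m}|^4)$ similarly counts quadruples $n_1n_2=n_3n_4$, and any such quadruple extends by a free choice of $n_5=n_6\in E_{N,m}$, this diagonal piece is bounded by $\|S_{N,m}\|_4^4\,|E_{N,m}|$, accounting for the first term on the right of \eqref{eq:identity_6thmoment}. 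Actually one must be slightly careful: the correct bound is to note that \emph{any} solution of $n_1n_2n_3=n_4n_5n_6$ in which some $n_i$ on the left equals some $n_j$ on the right can be cancelled down to a four-variable equation times a free variable, giving exactly the $\|S_{N,m}\|_4^4|E_{N,m}|$ majorant.

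The main work is the off-diagonal term, where no $n_i$ from $\{n_1,n_2,n_3\}$ equals any $n_j$ from $\{n_4,n_5,n_6\}$. Here I would extract the ``core'' common structure as follows. Given a solution $n_1n_2n_3=n_4n_5n_6$, I want to locate a pair $a,b$ with $(a,b)=1$ that records the coprime part of the overlap between the two sides. Concretely, from the equation one shows there is a way to write, after relabelling, $n_i = a_i' \cdot (\text{something})$ and $n_j = b_j' \cdot (\text{something})$ so that a common factor of prescribed $\Omega$-value, say $k$, splits off as a coprime pair $(a,b)$ together with a free complementary factor counted by $|E_{N/\max(a,b),\,m-k}|$. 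The remaining constraint on the four ``leftover'' variables, after factoring $a = a_1'a_2'$ and $b = b_1'b_2'$ in all possible ways, is precisely a balanced two-by-two multiplicative equation $a_1a_2=b_1b_2$ with the $\Omega$-values and size ranges dictated by how much was peeled off — this is exactly the function $\S(\cdot)$ in the statement. Summing over the value $k$ of $\Omega$ of the peeled core, over all coprime pairs $a,b\in E_{N,k}$, and over the factorizations $a=a_1'a_2'$, $b=b_1'b_2'$, produces the second term. The point of the decomposition is that it converts counting six-variable solutions into counting four-variable solutions with a weight, which is what makes the later analysis tractable.

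The step I expect to be the main obstacle is verifying that every off-diagonal solution is captured by this parametrization \emph{with the correct inequality direction} — i.e.\ that the right-hand side genuinely majorizes $\|S_{N,m}\|_6^6$ rather than merely being comparable. One has to check that the map from sextuples to the data $(k, a, b, a_1'a_2', b_1'b_2', \text{free factor}, \text{solution of }\S)$ is injective, or at least that each sextuple is counted at least once; overcounting is harmless since we only claim ``$\leq$''. The bookkeeping with the size constraints $a_i\le N/a_i'$, $b_i \le N/b_i'$ and the $\Omega$-shifts $m-\Omega(a_i')$, $m-\Omega(b_i')$ needs care, since $\Omega$ is additive only on coprime arguments and one must track that $a_1'a_2'=a$ forces $\Omega(a_1')+\Omega(a_2')=k$. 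I would set up notation for the overlap $\gcd$-structure of $\{n_1,n_2,n_3\}$ versus $\{n_4,n_5,n_6\}$ carefully, handle the fully-coprime-sides case and the partially-overlapping case uniformly by allowing $k$ to range from $1$ to $m$, and then read off \eqref{eq:identity_6thmoment} by collecting terms.
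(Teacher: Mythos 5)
Your overall strategy --- expand $\E(|S_{N,m}|^6)$ by orthogonality into a count of sextuples $n_1n_2n_3=n_4n_5n_6$, split off a diagonal, and parametrize the rest by peeling off a coprime core --- is the same idea as the paper's, which implements it by multiplying the identity $|S_{N,m}|^2=|E_{N,m}|+\sum_k\sum_{(a,b)=1}|E_{N/\max(a,b),m-k}|z(a)\overline{z(b)}$ by an analogous over-counting expansion of $|S_{N,m}|^4$ and matching characters. But two of your steps have genuine problems. First, the diagonal. The grouping $|S|^6=|S|^4\cdot|S|^2$ dictates splitting according to whether $n_3=n_6$ (equivalently, whether $n_1n_2=n_4n_5$): that case contributes \emph{exactly} $\|S_{N,m}\|_4^4|E_{N,m}|$, and the two mixed cases are empty. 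Your diagonal is instead ``some $n_i$ on the left equals some $n_j$ on the right,'' and your cancellation map (sextuple $\mapsto$ common value together with the residual quadruple) is not injective: the same pair (value, quadruple) arises from up to nine position patterns, and already the single pattern $n_1=n_4$ contributes $\|S_{N,m}\|_4^4|E_{N,m}|$ on its own. So your diagonal count is only $\le 9\|S_{N,m}\|_4^4|E_{N,m}|$, not the ``exactly $\|S_{N,m}\|_4^4|E_{N,m}|$ majorant'' you assert. This is harmless for the theorem but does not give the stated inequality; the fix is simply to split on $n_3=n_6$.

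Second, and more seriously, the step you defer as ``the main obstacle'' is the entire content of the proposition, and your sketch of it has the roles scrambled. The coprime pair $(a,b)$ is not ``a common factor that splits off'': writing $g=(n_3,n_6)$, one has $a=n_6/g$ and $b=n_3/g$ with $\Omega(a)=\Omega(b)=k\ge 1$, and it is $g$ (not $(a,b)$) that is the free factor ranging over $E_{N/\max(a,b),\,m-k}$. The decisive point your outline omits is that the equation $n_1n_2n_3=n_4n_5n_6$ \emph{forces} this same pair $(a,b)$ to be the coprime reduction of $(n_1n_2,\,n_4n_5)$. Only because of that coincidence do the factorizations $a=a_1'a_2'$, $b=b_1'b_2'$ (recording how the primes of $a$ distribute between $n_1$ and $n_2$, and of $b$ between $n_4$ and $n_5$) leave residual variables $a_i''=n_i/a_i'$, $b_i''=n_{3+i}/b_i'$ satisfying $a_1''a_2''=b_1''b_2''$ with $\Omega(a_i'')=m-\Omega(a_i')$ and $a_i''\le N/a_i'$, which $\S$ then over-counts. ``After relabelling'' is also not available here: the formula treats one distinguished pair (contributing $|E_{N/\max(a,b),m-k}|$) asymmetrically from the other four variables (contributing $\S$), so the distinguished pair must be fixed, and $n_3\ne n_6$ is exactly what guarantees $k\ge1$. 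Once you fix the distinguished pair as $(n_3,n_6)$ and write out the forced identification of the two coprime reductions, your argument becomes the paper's proof in combinatorial language; as written, that linkage --- the heart of the identity --- is missing.
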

\begin{proof} The following identity is from \cite{MR3430378}
\begin{equation}\label{eq:2nd_power}
 |S_{N, m}|^2=|E_{N, m}|+\sum_{k=1}^m \  \sum_{\substack{a, b\in E_{N, k}\\ (a, b)=1}}\left|E_{\frac{N}{\max(a, b)}, m-k}\right|z(a)\overline{z(b)}.
\end{equation}
For our computation, we will use the fact that 
\begin{equation}\label{eq:orthogonality}
\int_{T^\infty}z(n)\d m_\infty(z)=\begin{cases}
                                     1 \quad &\text{ if } n=1,\\
                                     0 \quad &\text{ if } n\geq 2.
                                    \end{cases} 
\end{equation}
Write $|S_{N, m}|^4$ as follows
\begin{equation}
 \label{eq:4th_power_1}
 |S_{N, m}|^4 =\sum_{\substack{a_i, b_i  \in E_{N, m} \\ i=1, 2}}z(a_1)z(a_2)\overline{z(b_1)z(b_2)}
 = \|S_{N, m}\|^4_4+\sum_{\substack{a_1a_2\ne b_1b_2 \\ a_i, b_i  \in E_{N, m}}}z(a_1)z(a_2)\overline{z(b_1)z(b_2)}.
\end{equation}
In the above sum we factor $a_1, a_2, b_1, b_2$ as  $a_i=a_i'a_i''$ and $b_i=b_i'b_i''$ such that 
\[(a_1a_2, b_1b_2)=a_1''a_2''=b_1''b_2''.\]
Further we write $a=a_1'a_2',\ b=b_1'b_2'$. The above factorizations of $a_i$ and $b_i$ are not necessarily unique. So we may introduce some extra terms in the expression for $|S_{N, m}|^4$ and write
\begin{align}
\label{eq:4th_power}
 |S_{N, m}|^4 
 \preceq\|S_{N, m}\|^4 + \sum_{k=1}^{2m} \ \sum_{\substack{a, b\in E_{N^2, k}\\ (a, b)=1}} \ \sum_{\substack{a=a_1'a_2'\\ a_i'\le N\\ \Omega(a_i')\le m}}
 \ \sum_{\substack{b=b_1'b_2'\\ b_i'\le N\\ \Omega(b_i')\le m}} 
 \ \ \sum_{\substack{a_1''a_2''=b_1''b_2''\\ \Omega(a_i'')=m-\Omega(a_i')\\ \Omega(b_i'')=m-\Omega(b_i')\\ a_i''\le \frac{N}{a_i'}, \ b_i''\le \frac{N}{b_i'}}} z(a)\overline{z(b)}.
\end{align}
The symbol \lq$\preceq$\rq above means that we have some extra terms in the left hand side of the expression.

The expression for $6^{th}$ norm in (\ref{eq:identity_6thmoment}) follows by matching the terms in (\ref{eq:4th_power}) with its conjugate in (\ref{eq:2nd_power}) and then using (\ref{eq:orthogonality}).
\end{proof}

Later we will use the notation $k_i=\Omega(a_i')$ and $ k_i'=\Omega(b_i')$ for $i=1, 2$.
%
%
%
\section{Integers With Given Number of Prime Factors}\label{sec:lemm}
Now we will give upper bounds for some expression involving $|E_{N, m}|$, which will be used later in the proof of our theorem.

The function $|E_{N, m}|$ is of interest in number theory, and studied extensively in the literature starting from the prime number theorem. Following 
estimate for $|E_{N, m}|$ is due to Sathe\cite{MR0056625}.
\begin{lem}\label{lem:EN}
For $N\rightarrow\infty$ and $1\leq m\leq (2-\epsilon)\log\log N$ for $0<\epsilon<1$, we have
\[|E_{N, m}|\asymp\frac{N(\log\log N)^{m-1}}{(m-1)!\log N}.\]
\end{lem}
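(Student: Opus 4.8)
The plan is to use the Selberg--Delange method followed by a Cauchy/saddle-point coefficient extraction; this is precisely the range-sensitive argument that produces the barrier $m\le(2-\epsilon)\log\log N$. Introduce the two-variable Dirichlet series
\[
F(s,z)=\sum_{n\ge 1}\frac{z^{\Omega(n)}}{n^{s}}=\prod_{p}\left(1-\frac{z}{p^{s}}\right)^{-1},
\]
absolutely convergent for $\mathrm{Re}(s)>1$, $|z|<2$, and factor it as $F(s,z)=\zeta(s)^{z}H(s,z)$ with $H(s,z)=\prod_{p}(1-zp^{-s})^{-1}(1-p^{-s})^{z}$. The point is that $H$ continues to a function holomorphic in $s$ for $\mathrm{Re}(s)>\tfrac12$, jointly holomorphic in $z$ on the disc $|z|\le 2-\tfrac{\epsilon}{2}$, bounded there, and bounded away from $0$: the Euler factor $(1-zp^{-s})^{-1}$ is singular, for $s$ near $1$, only at $z=p$, whose least value $z=2$ is excluded --- this is exactly where the constant $2$ in the hypothesis originates.

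Next apply the Selberg--Delange machinery (Perron's formula followed by a Hankel-type deformation around the branch singularity $s=1$ of $\zeta(s)^{z}$) to obtain, uniformly for $|z|\le 2-\tfrac{\epsilon}{2}$,
\[
\sum_{n\le N}z^{\Omega(n)}=\frac{H(1,z)}{\Gamma(z)}\,N(\log N)^{z-1}\left(1+O\!\left(\frac{1}{\log N}\right)\right),
\]
with implied constant depending only on $\epsilon$. Since $|E_{N,m}|$ is the coefficient of $z^{m}$ in $\sum_{n\le N}z^{\Omega(n)}$, Cauchy's formula gives, for any $0<r<2$,
\[
|E_{N,m}|=\frac{1}{2\pi i}\oint_{|z|=r}\frac{1}{z^{m+1}}\left(\sum_{n\le N}z^{\Omega(n)}\right)\d z,
\]
and I would take the saddle radius $r=r_{0}:=\dfrac{m}{\log\log N}$, which the hypothesis $m\le(2-\epsilon)\log\log N$ keeps inside $[0,2-\epsilon]$, a region where $H(1,z)/\Gamma(z)$ is analytic, bounded, and (away from $z=0$) bounded below.

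Finally, evaluate the contour integral by the saddle-point method applied to the dominant factor $z^{-m-1}(\log N)^{z}=\exp\bigl(z\log\log N-(m+1)\log z\bigr)$, whose saddle sits at $z=r_{0}$. The short arc of $|z|=r_{0}$ around $z=r_{0}$ produces the main term, which after a standard Laplace-type computation equals $\asymp\dfrac{N}{\log N}\cdot\dfrac{(\log\log N)^{m-1}}{(m-1)!}$ --- here one uses that $H(1,z)/\Gamma(z)$ vanishes to first order like $z$ at the origin and is otherwise comparable to a positive constant on that arc; the remaining arc is exponentially smaller because $\mathrm{Re}\bigl(z\log\log N\bigr)$ decreases away from $z=r_{0}$, and the relative error $O(1/\log N)$ feeds only a lower-order term. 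The main obstacle I anticipate is making the saddle-point estimate uniform in $m$ all the way up to $(2-\epsilon)\log\log N$: one must bound $H(1,z)/\Gamma(z)$ from below along the whole circle (the dangerous stretch near the negative real axis, where $1/\Gamma(z)$ is small near $z=-1$, is rescued by the exponential smallness of $(\log N)^{z}$ there), and one must verify that the Selberg--Delange error is a genuine power-of-$\log N$ saving, so that it cannot overwhelm a main term that is only polynomially larger. For the upper bound $|E_{N,m}|\ll\frac{N(\log\log N)^{m-1}}{(m-1)!\log N}$ alone, a more robust alternative is the elementary identity $m\,|E_{N,m}|=\sum_{p^{k}\le N}|E_{N/p^{k},\,m-k}|$ together with Mertens' estimate and induction on $m$, in which the $p=2$ prime-power terms again force the $2-\epsilon$ threshold.
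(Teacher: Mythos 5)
The paper offers no proof of this lemma: it is quoted as a known theorem of Sathe (the Sathe--Selberg theorem, \cite{MR0056625}), so there is no internal argument to compare against. Your sketch is the standard Selberg--Delange proof of that theorem and is correct in outline: the factorization $F(s,z)=\zeta(s)^{z}H(s,z)$, the uniform asymptotic for $\sum_{n\le N}z^{\Omega(n)}$, Cauchy's formula on the circle $|z|=m/\log\log N$, and the saddle-point evaluation are exactly the ingredients used in the literature, and you correctly locate the source of the threshold $2-\epsilon$ in the $p=2$ Euler factor. One technical claim needs repair, though: $H(s,z)$ is \emph{not} holomorphic in the full half-plane $\mathrm{Re}(s)>\frac{1}{2}$ uniformly over $|z|\le 2-\frac{\epsilon}{2}$, because the factor $(1-z2^{-s})^{-1}$ has a pole at $s=\log z/\log 2$, whose real part tends to $1$ as $|z|\to 2$ (for $|z|=1.9$ it already sits near $\mathrm{Re}(s)\approx 0.93$). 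What is true, and all that the truncated Hankel-contour argument requires, is holomorphy and boundedness of $H$ in a strip $\mathrm{Re}(s)>1-\delta(\epsilon)$ with $\delta(\epsilon)=1-\log(2-\epsilon/2)/\log 2>0$; with that correction the deformation around $s=1$ goes through unchanged. The remaining work you flag --- uniformity of the saddle-point estimate up to $m=(2-\epsilon)\log\log N$ and lower bounds for $H(1,z)/\Gamma(z)$ on the contour --- is genuinely where the effort lies and is carried out in the standard references; your elementary fallback for the upper bound via the identity $m|E_{N,m}|=\sum_{p^{j}\le N}|E_{N/p^{j},\,m-j}|$ is correct and is a sound independent route to the half of the lemma that the paper actually uses most heavily.
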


Later Balazard, Delange and Nicolas\cite{MR941613} generalized this result to an uniform range of $m$:

\begin{lem}\label{lem:gen_EN}
For $m\geq 1$ and $\frac{x}{2^m}\rightarrow\infty$, we have
\[|E_{N, m}|\asymp\frac{N}{2^m\left(\log\frac{N}{2^m}\right)}\sum_{j=0}^{m-1}\frac{\left(2\log\log\frac{N}{2^m}\right)^j }{j!}.\]
\end{lem}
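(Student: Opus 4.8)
The plan is to reduce the count to the classical Hardy--Ramanujan--Sathe estimates by peeling off the power of $2$. Every $n$ with $\Omega(n)=m$ is uniquely of the form $n=2^{a}k$ with $k$ odd and $0\le a\le m$; writing $M:=N/2^{m}$ (so the hypothesis is exactly $M\to\infty$) and reindexing by $j:=m-a$,
\[|E_{N,m}|=\sum_{a=0}^{m}\pi_{m-a}^{\mathrm{odd}}\!\left(\frac{N}{2^{a}}\right)=\sum_{j=0}^{m}\pi_{j}^{\mathrm{odd}}\!\left(2^{j}M\right),\qquad \pi_{j}^{\mathrm{odd}}(x):=\#\{k\le x:\ 2\nmid k,\ \Omega(k)=j\},\]
with $\pi_{0}^{\mathrm{odd}}(x)=1$ (accounting for $n=2^{m}$). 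An odd $k$ with $\Omega(k)=j$ satisfies $k\ge 3^{j}$, so $\pi_{j}^{\mathrm{odd}}(2^{j}M)$ vanishes once $j\gg\log M$; thus the sum is genuinely finite, of length $O(\log M)$.

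Next I would insert a uniform estimate for $\pi_{j}^{\mathrm{odd}}$. Applying the Selberg--Delange/Sathe method to
\[\sum_{k\ \mathrm{odd}}\frac{z^{\Omega(k)}}{k^{s}}=\prod_{p\ge 3}\frac{1}{1-zp^{-s}}=\zeta(s)^{z}\,(1-2^{-s})^{z}\,G(s,z),\]
with $G$ holomorphic and bounded away from $0$ near $s=1$ for $|z|\le 2-\delta$, gives, uniformly for $1\le j\le(2-\epsilon)\log\log x$,
\[\pi_{j}^{\mathrm{odd}}(x)\asymp\frac{x}{\log x}\cdot\frac{(\log\log x)^{j-1}}{(j-1)!},\]
while for $j>(2-\epsilon)\log\log x$ the Hardy--Ramanujan-type upper bound $\pi_{j}^{\mathrm{odd}}(x)\ll\frac{x}{\log x}\cdot\frac{(\log\log x+C)^{j-1}}{(j-1)!}$ is enough, the weights there being geometrically small. (Alternatively one quotes the known statement for $\pi_{j}(x)$ and observes that forbidding $2\mid k$ only modifies the Euler factor at $2$.)

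Then I would carry out the summation. Since $\log(2^{j}M)=\log M+j\log 2$ and $\log\log(2^{j}M)=\log\log M+O(j/\log M)$, over the effective range $j\ll\log M$ one has $\log(2^{j}M)\asymp\log M$ and $\log\log(2^{j}M)\asymp\log\log M$, so the $j$-th main term is $\asymp\frac{2^{j}M}{\log M}\cdot\frac{(\log\log M)^{j-1}}{(j-1)!}$; summing,
\[\sum_{j\ge 1}\frac{2^{j}M}{\log M}\cdot\frac{(\log\log M)^{j-1}}{(j-1)!}=\frac{2M}{\log M}\sum_{i\ge 0}\frac{(2\log\log M)^{i}}{i!}.\]
Restoring $M=N/2^{m}$ and truncating the inner sum at $i=m-1$ identifies this with a bounded multiple of $\frac{N}{2^{m}\log(N/2^{m})}\sum_{j=0}^{m-1}\frac{(2\log\log(N/2^{m}))^{j}}{j!}$; the $j=0$ term and the tail $j>(2-\epsilon)\log\log M$ are absorbed in the $\asymp$. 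As consistency checks, for $m\le(2-\epsilon)\log\log N$ this reproduces Lemma~\ref{lem:EN}, and for $m$ near $\log_{2}N$ it gives $|E_{N,m}|\asymp(N/2^{m})\log(N/2^{m})$.

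The main obstacle is genuine uniformity over all admissible $j$ with absolute implied constants. Two points need care: the transitional range $j\asymp 2\log\log M$, where the Selberg--Delange asymptotic degenerates and must be patched to the Hardy--Ramanujan bound without losing the order of magnitude; and the lower bound, where one must rule out cancellation across the $j$-contributions --- this is cleanest by discarding all but the single $j$ maximizing $\frac{2^{j}M}{\log M}\frac{(\log\log M)^{j-1}}{(j-1)!}$ together with $O(1)$ neighbours, which already yields the claimed order, and separately noting that when $m\gtrsim 2\log\log M$ the inner sum is comparable to $e^{2\log\log M}=(\log M)^{2}$ up to a bounded factor, so truncating it at $m-1$ is harmless.
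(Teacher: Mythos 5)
First, a point of reference: the paper does not prove this lemma at all --- it is quoted from Balazard, Delange and Nicolas \cite{MR941613} --- so there is no internal proof to compare against, and your attempt must be judged on its own. Your reduction (write $n=2^{a}k$ with $k$ odd, so that $|E_{N,m}|=\sum_{j=0}^{m}\pi_{j}^{\mathrm{odd}}(2^{j}M)$ with $M=N/2^{m}$, then sum the Sathe--Selberg main terms to recognize $\frac{2M}{\log M}\sum_{i}\frac{(2\log\log M)^{i}}{i!}$) is the natural route, the main-term bookkeeping is correct, and the lower bound is indeed unproblematic since every term is nonnegative, so there is nothing to cancel.

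The genuine gap is the upper bound you invoke for large $j$. The claimed estimate
\[\pi_{j}^{\mathrm{odd}}(x)\ll\frac{x}{\log x}\cdot\frac{(\log\log x+C)^{j-1}}{(j-1)!}\]
is \emph{false} once $j$ exceeds roughly $3\log\log x$: for $j$ near $\log x/\log 3$ the right-hand side is smaller than $x^{-A}$ for every $A$, while the left-hand side is at least $1$ (take $k=3^{j}$); more generally, for $3\log\log x\ll j\ll\log x$ the count is dominated by integers of the form $3^{a}\ell$ with $\ell$ small and decays only like $3^{-j}x(\log x)^{O(1)}$. This is precisely the phenomenon at $p=3$ that the lemma itself is correcting at $p=2$, so it cannot be assumed away; and since your effective range is $j\le\log M/\log(3/2)$, such $j$ genuinely occur whenever $m\gg\log\log M$. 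The repair is standard but must be stated: because the offending Euler factor is now at $p=3$, Selberg--Delange for odd integers is uniform up to $j\le(3-\epsilon)\log\log x$ (not just $(2-\epsilon)\log\log x$), and beyond that Rankin's inequality $\sum_{k\le x,\ 2\nmid k}y^{\Omega(k)}\ll_{y}x(\log x)^{y-1}$ with $y=3-\delta$ gives $\pi_{j}^{\mathrm{odd}}(x)\ll_{\delta}x(\log x)^{2}(3-\delta)^{-j}$, whence
\[\sum_{j>3\log\log M}\pi_{j}^{\mathrm{odd}}(2^{j}M)\ll M(\log M)^{2}\sum_{j>3\log\log M}\left(\frac{2}{3-\delta}\right)^{j}\ll M(\log M)^{2+3\log(2/3)+O(\delta)},\]
whose exponent is $2-3\log(3/2)+O(\delta)<1$. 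This is $o(M\log M)$, hence negligible against the main term, which is $\asymp M\log M$ in the regime $m>2\log\log M$ where these $j$ arise. With that substitution (and the corresponding adjustment of your ``transitional range'' from $2\log\log M$ to $3\log\log M$), the argument closes.
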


We will use Lemma~\ref{lem:EN} and Lemma~\ref{lem:gen_EN} to prove the following results. 
Lemma~\ref{lem:Ebk/bb} has also appeared in \cite{MR3430378}.

\begin{lem}\label{lem:Ebk/bb}
For $k\leq \log\log N$ and $N, k\rightarrow\infty,$ we have
$$\sum_{b\in E_{N, k}}\frac{\left|E_{b, k}\right|}{b^2}\ll 2^{2k}.$$
\end{lem}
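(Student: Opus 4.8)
The plan is to dyadically decompose the sum over $b$ and feed in the uniform estimate of Lemma~\ref{lem:gen_EN}. Grouping the integers $b\in E_{N,k}$ by the dyadic block $(2^{j-1},2^j]$ containing them, and using that $\Omega(b)=k$ forces $2^k\le b$ while $b\le N$ forces $j\ll\log N$, one has $|E_{b,k}|\le|E_{2^j,k}|$, $b^{-2}\le 4\cdot4^{-j}$, and at most $|E_{2^j,k}|$ admissible $b$ in each block, whence
\[\sum_{b\in E_{N,k}}\frac{|E_{b,k}|}{b^2}\ \ll\ \sum_{k\le j\ll\log N}\frac{|E_{2^j,k}|^2}{4^j}.\]
The $O(1)$ blocks with $j\le k+C_0$, where $C_0$ is an absolute constant, contribute $O(1)\ll 2^{2k}$ via the trivial bound $|E_{2^j,k}|\le 2^j$; these are exactly the blocks where $2^{j-k}$ is bounded and Lemma~\ref{lem:gen_EN} is unavailable.

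For $j>k+C_0$ I would apply Lemma~\ref{lem:gen_EN} with $x=2^j$, $m=k$, so that $x/2^m=2^{j-k}$ is large. Writing $u=j-k$ and $\mu=2\log\log(2^u)=2\log u+O(1)$, the lemma yields $|E_{2^j,k}|\ll\frac{2^j}{2^k u}\sum_{i=0}^{k-1}\frac{\mu^i}{i!}$ and hence
\[\frac{|E_{2^j,k}|^2}{4^j}\ \ll\ \frac{1}{4^k u^2}\Bigl(\sum_{i=0}^{k-1}\frac{\mu^i}{i!}\Bigr)^2.\]
So the whole sum is $\ll 2^{2k}$ once one shows $\sum_{u>C_0}u^{-2}\bigl(\sum_{i<k}\mu^i/i!\bigr)^2\ll 16^k$, since then the contribution of the blocks $j>k+C_0$ is $\ll 4^{-k}\cdot16^k=2^{2k}$.

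To prove this last bound I would split the $u$-range by comparing $\mu\approx 2\log u$ with $k$. If $u\le e^{k/2}$ then $\mu\le k+O(1)$ and $\sum_{i<k}\mu^i/i!\le e^\mu\ll u^2$, so this range contributes $\ll\sum_{u\le e^{k/2}}u^2\ll e^{3k/2}=o(16^k)$ (using $e^{3/2}<16$). If $u>e^{k/2}$ then $\mu\ge k-1$, so the terms $\mu^i/i!$ increase for $i\le k-1$; I would use $\sum_{i<k}\mu^i/i!\le k\,\mu^{k-1}/(k-1)!$ on $e^{k/2}<u\le e^k$ and the sharper geometric bound $\sum_{i<k}\mu^i/i!\le 2\,\mu^{k-1}/(k-1)!$ (valid once $\mu\ge 2k$, i.e. $u>e^k$), together with $\mu\ll\log u$ to replace $\mu^{2k-2}$ by $4^{k-1}(\log u)^{2k-2}$. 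Then everything reduces to $\sum_u(\log u)^{2k-2}u^{-2}$; comparing with $\int_1^\infty(\log t)^{2k-2}t^{-2}\,\d t=\int_0^\infty w^{2k-2}e^{-w}\,\d w=(2k-2)!$ — and, over $e^{k/2}<u\le e^k$, with the far smaller integral on which $w^{2k-2}e^{-w}$ is still increasing — the range $e^{k/2}<u\le e^k$ contributes $\ll k^3\,4^{k-1}e^k=o(16^k)$ (using $4e<16$) and the range $u>e^k$ contributes $\ll 4^k\binom{2k-2}{k-1}\le 4^k\cdot4^{k-1}=\frac{1}{4}\,16^k$. Adding the three ranges gives $\ll 16^k$, and thus the Lemma.

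The step I expect to be the main obstacle is the middle range $e^{k/2}<u\le e^k$: there the truncated exponential $\sum_{i<k}\mu^i/i!$ is genuinely of order $\sqrt k\,\mu^{k-1}/(k-1)!$ rather than a bounded multiple of its last term, so every clean uniform estimate loses a power of $k$, and the target $16^k$ is reached only with room to spare in the outer range $u>e^k$. The point that rescues the middle range is that the weight $\sum_u(\log u)^{2k-2}u^{-2}$ restricted to it is exponentially smaller (by a factor $\asymp(e/4)^k$) than the full gamma value $(2k-2)!$, so even a wasteful bound there is $o(16^k)$, while the precise geometric bound on $u>e^k$ carries the main term $\asymp 16^k/\sqrt k$. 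A minor separate point, already noted, is that near $b=2^k$ Lemma~\ref{lem:gen_EN} cannot be used and the trivial bound handles those finitely many dyadic blocks.
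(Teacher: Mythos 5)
Your proof is correct, but it takes a genuinely different route from the paper's. The paper writes the sum as a Stieltjes integral $\int_{2^k}^N(\cdots)\,\d E_{x,k}$, inserts the density from Lemma~\ref{lem:gen_EN} twice, expands the square of the truncated exponential into a double sum over $j_1,j_2<k$, and evaluates each resulting integral \emph{exactly} as $\Gamma(j_1+j_2+1)$ via $y=\log\log(x/2^k)$; the identity $\sum_{j_1+j_2=l}l!/(j_1!j_2!)=2^l$ then gives $\sum_{l<2k-1}4^l\ll 16^k$, and dividing by $2^{2k}$ finishes. You instead discretize dyadically, keep the truncated exponential $\sum_{i<k}\mu^i/i!$ intact, and control it by a three-range comparison of $\mu\approx 2\log u$ with $k$; the same gamma integral $\int_0^\infty w^{2k-2}e^{-w}\,\d w=(2k-2)!$ and the central binomial bound $\binom{2k-2}{k-1}\le 4^{k-1}$ reappear, but only in the dominant range $u>e^k$. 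What your version buys is (i) a cleaner treatment of the endpoint $b\asymp 2^k$, where Lemma~\ref{lem:gen_EN} is not applicable and the paper's integral representation is a little cavalier, since you isolate those $O(1)$ blocks and bound them trivially, and (ii) an explicit identification of where the mass sits (the blocks with $\log\log b\gtrsim k$, where the truncated exponential is comparable to its top term), at the cost of a longer case analysis; the paper's term-by-term expansion gets the $16^k$ in one line from the binomial theorem. One microscopic imprecision: for $u>e^k$ you have $\mu=2\log u+2\log\log 2>2k-1$ rather than $\mu\ge 2k$, but the geometric-series bound $\sum_{i<k}\mu^i/i!\le 2\mu^{k-1}/(k-1)!$ only requires $\mu\ge 2(k-1)$, which does hold, so nothing breaks.
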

\begin{proof}
Using Lemma~\ref{lem:gen_EN}
\begin{align*}
 \sum_{b\in E_{N, k}}\frac{\left|E_{b, k}\right|}{b^2}&\ll\int_{2^k}^N\frac{1}{2^kx\log\frac{x}{2^k}}
 \sum_{0\leq j_1<k}\frac{\left(2\log\log\frac{x}{2^k}\right)^{j_1}}{j_1!}\d E_{x, k}\\
 &\ll\int_{2^k}^N\frac{1}{2^{2k}x\left(\log\frac{x}{2^k}\right)^2}\sum_{0\leq j_1, j_2<k}\frac{\left(2\log\log\frac{x}{2^k}\right)^{j_1+j_2}}{j_1!j_2!}\d x\\
 &\ll\frac{1}{2^{2k}}\sum_{0\leq j_1, j_2<k}\frac{2^{j_1+j_2}}{j_1!j_2!}\int_{2^k}^N\frac{\left(\log\log\frac{x}{2^k}\right)^{j_1+j_2}}{x\left(\log\frac{x}{2^k}\right)^2}\d x\\
 &\ll\frac{1}{2^{2k}}\sum_{0\leq l<2k-1}\sum_{ j_1+ j_2=l}2^{j_1+j_2}\frac{\Gamma(j_1+j_2+1)}{j_1!j_2!}\\
 &\ll\frac{1}{2^{2k}}\sum_{0\leq l<2k-1}2^{2l}\ll2^{2k}.
\end{align*}
In the above computation we have used the following formula for Gamma function:
\[\Gamma(n+1)=\int_0^\infty x^ne^{-x}\d x= n!.\]
\end{proof}



\begin{lem}\label{lem:sumENb}
Let $k, k'\leq \log\log N$ and $N\rightarrow\infty$. Then
 \[\sum_{\substack{b \in E_{N, k}\\ b>\sqrt{N}}}\left|E_{\frac{N}{b}, k'}\right|\ll \left|E_{N, k}\right|\frac{(\log\log N)^{k'}}{k'!}.\]
\end{lem}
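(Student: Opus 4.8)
The plan is to reduce the sum to an integral against $\d|E_{x,k}|$ and then use Lemma~\ref{lem:gen_EN} (or Lemma~\ref{lem:EN}) to estimate both $|E_{N/b,k'}|$ and the resulting density. First I would write
\[
\sum_{\substack{b\in E_{N,k}\\ b>\sqrt N}}\left|E_{\tfrac{N}{b},k'}\right|
= \int_{\sqrt N}^{N}\left|E_{\tfrac{N}{x},k'}\right|\,\d|E_{x,k}|,
\]
and apply Lemma~\ref{lem:gen_EN} to $|E_{N/x,k'}|$; since $b>\sqrt N$ we have $N/x<\sqrt N$, so $\log\log(N/x)\le\log\log N$ and the polylogarithmic factor $\sum_{j<k'}(2\log\log(N/x))^j/j!$ is bounded by (a constant times) $(\log\log N)^{k'-1}/(k'-1)!$ times a harmless power of $2$; the leading factor contributes $N/(2^{k'}x\log(N/x))$. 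The remaining task is then to bound $\int_{\sqrt N}^N \frac{N}{x\log(N/x)}\,\d|E_{x,k}|$, which after a second application of Lemma~\ref{lem:gen_EN} to the density $\d|E_{x,k}|$ becomes an elementary integral of the shape $\int_{\sqrt N}^N \frac{N}{x^2\log(N/x)\log x}\big(\text{polylog}\big)\,\d x$; the factor $N/x^2$ is integrable near $N^{1/2}$ and forces $x$ to be close to $\sqrt N$, so the integral is $\ll$ a constant times the value of the integrand's slowly-varying part at $x=\sqrt N$, leaving a clean bound.

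The key bookkeeping step is to recognize that, up to bounded constants and bounded powers of $2$ (absorbed since $k,k'\ll\log\log N$ only indirectly — in fact here we need the powers of $2$ to be genuinely bounded, which they are because the $N/x^2$ weight localizes $x$ near $\sqrt N$ and kills any $2^{k}$-type loss), one has
\[
\sum_{\substack{b\in E_{N,k}\\ b>\sqrt N}}\left|E_{\tfrac{N}{b},k'}\right|
\ll \frac{(\log\log N)^{k'}}{k'!}\cdot\frac{N(\log\log N)^{k-1}}{(k-1)!\log N},
\]
and then to identify the second factor as $\asymp|E_{N,k}|$ via Lemma~\ref{lem:EN} (valid since $k\le\log\log N<(2-\epsilon)\log\log N$). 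An alternative, slightly cleaner route avoiding Stieltjes integration entirely is to split the dyadic range $\sqrt N<b\le N$ into blocks $b\in(2^{-j-1}N,2^{-j}N]$ for $0\le j\le \tfrac12\log_2 N$, use $|E_{N/b,k'}|\ll \frac{N/b}{\log(N/b)}\frac{(\log\log N)^{k'-1}}{(k'-1)!}\ll 2^{-j}N\frac{(\log\log N)^{k'-1}}{(k'-1)!}$ on each block (the $\log$ in the denominator only helps), and bound the number of $b\in E_{N,k}$ in each block by $|E_{N,k}|$ crudely, or better by $|E_{2^{-j}N,k}|$; summing the geometric-type series in $j$ then recovers the claimed bound since $\sum_j 2^{-j}$ converges and $|E_{2^{-j}N,k}|\le|E_{N,k}|$.

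The main obstacle I anticipate is controlling the interaction between the two polylogarithmic sums coming from the two applications of Lemma~\ref{lem:gen_EN}: naively multiplying $\sum_{j_1<k}$ and $\sum_{j_2<k'}$ produces a double sum whose total degree can reach $k+k'-2$, and one must check that the $N/x^2$-type weight in the integral (equivalently, the geometric decay in the dyadic decomposition) prevents this from inflating the bound by an unwanted factor like $2^{k+k'}$ or $(\log\log N)$ beyond what is claimed. Concretely, after the change of variables $x=N^{1-t}$ (so $N/x^2 = N^{2t-1}$), the integral concentrates at $t\to 0^+$ where $N^{2t-1}$ is tiny unless $t$ is within $O(1/\log N)$ of $\tfrac12$; handling that boundary term carefully, and verifying that the emerging constant is genuinely absolute (independent of $k,k'$), is where the real care is needed. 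Once that is done, invoking Lemma~\ref{lem:EN} to rewrite the $k$-dependent factor as $\asymp|E_{N,k}|$ finishes the proof.
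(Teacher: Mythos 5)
Your overall skeleton (write the sum as $\int_{\sqrt N}^{\cdot}|E_{N/x,k'}|\,\d|E_{x,k}|$ and feed both factors into Lemma~\ref{lem:gen_EN}) is exactly the paper's, but the quantitative bookkeeping in both of your variants breaks down, and in particular neither variant as described reaches the stated factor $\frac{1}{k'!}$. The localization heuristic is the first problem: after inserting the density of $E_{x,k}$ and using $\log x\ge\frac12\log N$, the surviving measure is $\asymp |E_{N,k}|\,\frac{\d x}{x\log(N/x)}$, whose total mass over $\sqrt N\le x\le N/2^{k'}$ is $\asymp\log\log N$, \emph{not} $O(1)$: each dyadic block $N/x\asymp 2^j$ contributes $\asymp 1/j$, and the harmonic sum over $j\le\frac12\log_2N$ is precisely where the final power of $\log\log N$ (raising the exponent from $k'-1$ to $k'$) comes from. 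So the integral is not ``a constant times the integrand near $x=\sqrt N$''; if you treat it that way you get a bound that is too strong and the argument is invalid. Moreover, even with the correct $\log\log N$ from the integral, pulling the polylogarithmic sum out at its pointwise maximum $\asymp 2^{k'}(\log\log N)^{k'-1}/(k'-1)!$ only yields $|E_{N,k}|(\log\log N)^{k'}/(k'-1)!$. The paper instead keeps each term $(\log\log\frac{N}{x2^{k'}})^j$ \emph{inside} the integral and substitutes $y=\log\log\frac{N}{x2^{k'}}$, so that $\int y^j\,\d y=\frac{y^{j+1}}{j+1}$ upgrades $\frac1{j!}$ to $\frac1{(j+1)!}$ and the $j=k'-1$ term delivers the claimed $\frac{(\log\log N)^{k'}}{k'!}$. (Your weaker bound with $(k'-1)!$ would in fact still suffice for the application in Section~\ref{sec:proof}, which only uses $(\log\log N)^{m-k}$, but it is not the lemma as stated.)

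The dyadic variant has a concrete error that makes it fail outright: on the block $b\in(2^{-j-1}N,2^{-j}N]$ one has $N/b\asymp 2^{j}$, so Lemma~\ref{lem:gen_EN} gives $|E_{N/b,k'}|\ll \frac{2^{j}}{j}\cdot(\text{polylog})$, not $\ll 2^{-j}N\cdot(\text{polylog})$. Your bound is technically true (since $2^{-j}N\ge 2^{j}$ for $j\le\frac12\log_2N$) but lossy by a factor $2^{-2j}N$, and combining it with $\sum_j 2^{-j}=O(1)$ leaves a spurious factor of $N$ in the final answer. With the correct block bound and the sharper block count $|E_{2^{-j}N,k}|\ll 2^{-j}|E_{N,k}|$, the powers of $2$ cancel exactly and you are left with the harmonic sum $\sum_{j}1/j\asymp\log\log N$ — there is no convergent geometric series here, and recognizing the harmonic accumulation is essential to land on the exponent $k'$.
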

\begin{proof}
Using Lemma~\ref{lem:gen_EN}, we simplify the above sum as follows:
\begin{align*}
 \sum_{\substack{b \in E_{N, k}\\ b>\sqrt{N}}}\left|E_{\frac{N}{b}, k'}\right| 
 &\ll \int_{\sqrt N}^{N/2^k}\frac{N}{x2^{k'}\left(\log\frac{N}{x2^{k'}}\right)}\sum_{j=0}^{k'-1}\frac{\left(2\log\log\frac{N}{x2^{k'}}\right)^j }{j!}\d |E_{x, k}|\\
 &\ll |E_{N, k}|\sum_{j=0}^{k'-1}\frac{1}{j!2^{k'-j}}\int_{\sqrt N}^{N/2^k}\frac{\left(\log\log\frac{N}{x2^{k'}}\right)^j }{x\log\frac{N}{x2^{k'}}}\d x \\
 &\ll |E_{N, k}|\sum_{j=0}^{k'-1}\frac{1}{j!2^{k'-j}}\int_{\log((k-k')\log2)}^{\log(\half\log N + (k-k')\log 2 )}y^j\d y\\
 &\ll |E_{N, k}|\sum_{j=0}^{k'-1}\frac{(\log\log N)^{j+1}}{(j+1)!2^{k'-j}}\ll \left|E_{N, k}\right|(\log\log N)^{k'}.
\end{align*}
\end{proof}

%

\section{Upper Bound For $\S$} \label{sec:S}
In this section we will obtain some estimates for upper bound of $\S$.
Recall
\[\S(N_i, N_i', m_i, m_i': i=1, 2)\ =\sum_{\substack{a_1a_2=b_1b_2\\ \Omega(a_i)=m_i, \Omega(b_i)=m_i' \\ a_i\le N_i, \ b_i\le N_i'}} 1.\]
\begin{lem}\label{lem:formula_S}
Let 
\begin{align*}
 & N=N_1'N_2'=\min(N_1N_2, N_1'N_2') \text{ and }\\
 & m=m_1+m_2=m_1'+m_2'.\\
\end{align*}
Then
\begin{align*}
 &\S(N_i, N_i', m_i, m_i': i=1, 2)\\
 &\ll\left\{
 \left|E_{N_1', m_1'}\right|\left|E_{N_2', m_2'}\right|\left(\left|E_{N_1, m_1}\right|\left|E_{\frac{N}{N_1}, m_2}\right|
 +\left|E_{\frac{N}{N_2}, m_1}\right|\left|E_{N_2, m_2}\right|\right)\right\}^\half.
\end{align*}
\end{lem}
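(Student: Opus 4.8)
The plan is to bound the counting function $\S(N_i, N_i', m_i, m_i': i=1, 2)$ by introducing the greatest common divisors of the cross pairs and then applying Cauchy--Schwarz to decouple the $a$-side constraints from the $b$-side constraints. Concretely, in the equation $a_1a_2 = b_1b_2$ write $d_{ij} = (a_i, b_j)$ for $i, j \in \{1, 2\}$; since $z$ is completely multiplicative the combinatorics here is just unique factorization, so each $a_i$ factors as a product over $j$ of the local parts and similarly for $b_j$, giving $a_1 = d_{11}d_{12}$, $a_2 = d_{21}d_{22}$, $b_1 = d_{11}d_{21}$, $b_2 = d_{12}d_{22}$ (after absorbing the common prime powers correctly). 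Thus a solution of $a_1a_2 = b_1b_2$ is essentially parametrized by the quadruple $(d_{11}, d_{12}, d_{21}, d_{22})$, and the size and $\Omega$-constraints on $a_i, b_j$ become constraints on products of the $d_{ij}$.

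Given that parametrization, I would write
\[
\S(N_i, N_i', m_i, m_i': i=1,2) \ll \sum_{\substack{d_{11}d_{12}d_{21}d_{22}\\ \text{constraints}}} 1
\]
and split the summand symmetrically as a product of a factor depending on the pair $(d_{11}d_{21},\, d_{12}d_{22})$ — i.e.\ on $(b_1, b_2)$ — and a factor depending on $(d_{11}d_{12},\, d_{21}d_{22})$ — i.e.\ on $(a_1, a_2)$. Applying Cauchy--Schwarz in the $d_{ij}$ variables then yields
\[
\S \ll \Bigl( \sum_{\substack{b_1 \le N_1',\ b_2 \le N_2'\\ \Omega(b_i)=m_i'}} r(b_1b_2) \Bigr)^{\half} \Bigl( \sum_{\substack{a_1,a_2\\ \Omega(a_i)=m_i,\ \text{size constraints}}} r(a_1a_2) \Bigr)^{\half},
\]
where $r(\cdot)$ counts factorizations; but because we only need an upper bound, the cleaner route is to bound one of the two square-root factors trivially by $|E_{N_1', m_1'}|\,|E_{N_2', m_2'}|$ (all the $b$'s are free subject only to their own size and $\Omega$ constraints), and to bound the other factor — the $a$-side sum with the constraint that $a_1a_2$ equals some $b_1b_2 \le N$ — by splitting according to whether $a_1 \le N/N_2$ or $a_2 \le N/N_1$ (one of these must hold since $a_1a_2 \le N$), giving the two terms $|E_{N_1, m_1}||E_{N/N_1, m_2}|$ and $|E_{N/N_2, m_1}||E_{N_2, m_2}|$ inside the remaining square root.

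The main obstacle I expect is bookkeeping the factorization correctly: the map $(a_1, a_2, b_1, b_2) \mapsto (d_{ij})$ is not quite a bijection when the $a_i$ share prime factors with each other, or when a prime divides more than two of the four numbers, so one has to check that passing to the $d_{ij}$ over-counts rather than under-counts (which is fine for an upper bound) and that the $\Omega$-bookkeeping $m = m_1 + m_2 = m_1' + m_2'$ is respected — this is exactly why the hypothesis $N = N_1'N_2' = \min(N_1N_2, N_1'N_2')$ and the matching $\Omega$ condition are imposed, so that the $b$-side is the ``smaller'' side and can absorb the equality constraint harmlessly. A secondary point is making sure the size constraints survive Cauchy--Schwarz: after decoupling, the constraint $a_1 a_2 \le N$ is the only thing linking the two sides, and it is handled by the dichotomy above, so no information is lost beyond constant factors. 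Once the parametrization is set up, the rest is a direct estimate with no analytic input — all the hard analytic work is deferred to the estimates on $|E_{N, m}|$ from Section~\ref{sec:lemm}.
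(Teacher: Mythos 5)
Your overall strategy --- interpret $\S$ as $\sum_n r_a(n)r_b(n)$ for the two representation functions and apply Cauchy--Schwarz to decouple the $a$-constraints from the $b$-constraints, then bound the $b$-factor by $|E_{N_1',m_1'}||E_{N_2',m_2'}|$ and the $a$-factor via the dichotomy $a_2\le N/N_1$ or $a_1\le N/N_2$ --- is the same as the paper's. The paper merely phrases the decoupling as Cauchy--Schwarz for the inner product $\int_{\T^\infty}\bigl(\sum_{n=n_1n_2}z(n)\bigr)\overline{\bigl(\sum_{n'=n_1'n_2'}z(n')\bigr)}\,\d\mu_\infty$, which by the orthogonality relation (\ref{eq:orthogonality}) is identical to your representation-function computation. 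The gcd parametrization by the $d_{ij}$ is unnecessary scaffolding here, and the bookkeeping worries it raises are moot: Cauchy--Schwarz applies directly to $\sum_n r_a(n)r_b(n)$ with no factorization of the solutions required.

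The genuine gap is in how you close the two factors. After Cauchy--Schwarz the $b$-side factor is $\bigl(\sum_n r_b(n)^2\bigr)^{\half}$, the square root of the number of quadruples $(b_1,b_2,\tilde b_1,\tilde b_2)$ with $b_1b_2=\tilde b_1\tilde b_2$ and $b_i,\tilde b_i\in E_{N_i',m_i'}$ --- a multiplicative energy, not a pair count. Your ``trivial'' bound of this square-root factor by $|E_{N_1',m_1'}||E_{N_2',m_2'}|$ is indeed trivial (it is $\sum_n r_b(n)^2\le(\sum_n r_b(n))^2$), but it is the \emph{square} of what the lemma asserts: the stated bound requires $\sum_n r_b(n)^2\ll|E_{N_1',m_1'}||E_{N_2',m_2'}|$, i.e.\ that the energy is dominated by its diagonal. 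That is not trivial --- in the paper's formulation this factor is $\E\bigl(|S_{N_1',m_1'}|^2|S_{N_2',m_2'}|^2\bigr)^{\half}$, and controlling it by the pair count is exactly fourth-moment information of the type $\|S_{N,m}\|_4^4\ll|E_{N,m}|^2$. With your bound the conclusion is weaker than the lemma by the factor $\bigl(|E_{N_1',m_1'}||E_{N_2',m_2'}|\bigr)^{\half}$, which would not suffice for the application in Section~\ref{sec:proof}. The same issue recurs on the $a$-side: your dichotomy counts pairs $(a_1,a_2)$ with $a_1a_2\le N$, while the Cauchy--Schwarz factor is again a quadruple count $\#\{a_1a_2=\tilde a_1\tilde a_2\le N\}$, so you still owe an argument that off-diagonal solutions do not dominate there either.
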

\begin{proof}
By Cauchy-Schwarz inequality 
\begin{align*}
 &S(N_i, N_i', m_i, m_i': i=1, 2)
 = \int\limits_{\mathbb{T}^{\infty}}\left(\sum_{\substack{n\\ n=n_1n_2\\n_i\in E_{N_i, m_i}}}z(n)\right)
 \overline{\left(\sum_{\substack{n'\\ n'=n_1'n_2' \\n_i'\in E_{N_i', m_i'}}}z(n_i')\right)}\d\mu_\infty\\
 &\ll \left(\int\limits_{\mathbb{T}^{\infty}}\left|\sum_{\substack{n\leq N\\ n=n_1n_2\\n_i\in E_{N_i, m_i}}}z(n)\right|^2\d\mu_\infty\right)^\half
 \left(\int\limits_{\mathbb{T}^{\infty}}\left|\sum_{\substack{n'\leq N\\ n'=n_1'n_2' \\n_i'\in E_{N_i', m_i'}}}z(n_i')\right|^2\d\mu_\infty\right)^\half\\
 &\ll\left\{ \left|E_{N_1', m_1'}\right|\left|E_{N_2', m_2'}\right|\left(\left|E_{N_1, m_1}\right|\left|E_{\frac{N}{N_1}, m_2}\right|
 +\left|E_{\frac{N}{N_2}, m_1}\right|\left|E_{N_2, m_2}\right|\right)\right\}^\half.
\end{align*}
\end{proof}


\section{Proof of Theorem~\ref{thm:main}}\label{sec:proof}
Clearly $\|S_{N, m}\|_6^6\gg |E_{N, m}|^3$. So to prove Theorem~\ref{thm:main}, it is sufficient to show 
\[\|S_{N, m}\|_6^6\ll |E_{N, m}|^3.\]
For $m<\half\log\log N$, it is shown in \cite{MR3430378} that $\|S_{N, m}\|_4^4\ll |E_{N, m}|^2$. This reduce the problem to show that the sum in the left hand side of (\ref{eq:identity_6thmoment}) is bounded by
$|E_{N, m}|^3$. We divide this sum in the following 2-parts and estimate each of them separately:
\begin{align*}
 & \sum_{k=1}^m  \sum_{\substack{a, b\in E_{N, k}\\ (a, b)=1}}\left|E_{\frac{N}{\max(a, b)},\ m-k}\right|
 \sum_{a=a_1'a_2'} \sum_{b=b_1'b_2'} \S\left(\frac{N}{a_i'}, \frac{N}{b_i'}, m-\Omega(a_i'), m-\Omega(b_i'): i=1, 2\right) \\
 \leq&8\sum_{k=1}^m\sum_{b\in E_{N, k}}\left|E_{\frac{N}{b},\ m-k}\right|\sum_{\substack{ a\in E_{b, k}\\ (a, b)=1}} \ \sum_{\substack{a=a_1'a_2'\\ a_1'\leq a_2'}}
 \ \sum_{\substack{b=b_1'b_2'\\ b_1'\leq b_2'}}\S\left(\frac{N}{a_i'}, \frac{N}{b_i'}, m-\Omega(a_i'), m-\Omega(b_i'): i=1, 2\right)\\
 =&\sum_{k=1}^m \left(A_1(k)+A_2(k)\right),
\end{align*}
where
\begin{align*}
A_1(k)&= \sum_{\substack{ b\in E_{N, k}\\ b\leq \sqrt N}}\left|E_{\frac{N}{b},\ m-k}\right|\sum_{\substack{ a\in E_{b, k}\\ (a, b)=1}} \ \sum_{\substack{a=a_1'a_2'\\ a_1'\leq a_2'}}
 \ \sum_{\substack{b=b_1'b_2'\\ b_1'\leq b_2'}}\S\left(\frac{N}{a_i'}, \frac{N}{b_i'}, m-\Omega(a_i'), m-\Omega(b_i'): i=1, 2\right), \\
A_2(k)&= \sum_{\substack{ b\in E_{N, k}\\ b> \sqrt N}}\left|E_{\frac{N}{b},\ m-k}\right|\sum_{\substack{ a\in E_{b, k}\\ (a, b)=1 }}
\ \sum_{\substack{a=a_1'a_2'\\ a_1'\leq a_2'}} \ \sum_{\substack{b=b_1'b_2'\\ b_1'\leq b_2'}}
\S\left(\frac{N}{a_i'}, \frac{N}{b_i'}, m-\Omega(a_i'), m-\Omega(b_i'): i=1, 2\right).\\
\end{align*}

We also recall the notations  $k_i=\Omega(a_i')$ and $ k_i'=\Omega(b_i')$ for $i=1, 2$.
\subsection{Computation for $A_1$}
Using upper bounds for $\S$ and $|E_{N/b, m-k}|$ from Lemma~\ref{lem:formula_S} and Lemma~\ref{lem:EN} respectively, we simplify the expression for $A_1(k)$ as follows
\begin{align*}
 &A_1(k) \ll |E_{N, m-k}|\sum_{\substack{b\in E_{N, k}\\ b\leq \sqrt N}}\frac{1}{b} \sum_{\substack{ a\in E_{b, k}\\ (a, b)=1}} \ \sum_{\substack{a=a_1'a_2'\\ a_1'\leq a_2'}}
 \ \sum_{\substack{b=b_1'b_2'\\ b_1'\leq b_2'}}\\
 &\times\left\{|E_{N/b_1', m-k_1'}||E_{N/b_2', m-k_2'}|(|E_{N/a_1, m-k_1}||E_{Na_1/b, m-k_2}|+|E_{Na_2/b, m-k_1}||E_{N/a_2, m-k_2}|)\right\}^{\half}\\
 &\ll 2^{2k}|E_{N, m-k}|\sum_{\substack{b\in E_{N, k}\\ b\leq \sqrt N}}\frac{|E_{b, k}|}{b^2} \sum_{\substack{k=k_1+k_2\\ ~=k_1'+k_2'}}
 \left(\left|E_{N, m-k_1}\right|\left|E_{N, m-k_1'}\right|\left|E_{N, m-k_2}\right|\left|E_{N, m-k_2'}\right|\right)^{\half}.
\end{align*}
By Lemma~\ref{lem:EN}
\begin{align*}
&|E_{N/b_1', m-k_1'}||E_{N/b_2', m-k_2'}|(|E_{N/a_1, m-k_1}||E_{Na_1/b, m-k_2}|+|E_{Na_2/b, m-k_1}||E_{N/a_2, m-k_2}|\\ 
&\ll \frac{1}{b^2}\left|E_{N, m-k_1}\right|\left|E_{N, m-k_1'}\right|\left|E_{N, m-k_2}\right|\left|E_{N, m-k_2'}\right|,
\end{align*}
and trivially $\sum_{\substack{a=a_1'a_2'\\ a_1'\leq a_2'}}\sum_{\substack{b=b_1'b_2'\\ b_1'\leq b_2'}}1\ll 2^{2k}$. These justifies the above bound for $A_1(k)$.
By Lemma~\ref{lem:Ebk/bb} $\sum_{\substack{b\in E_{N, k}\\ b\leq \sqrt N}}\frac{|E_{b, k}|}{b^2}\ll 2^{2k}$. This simplifies the bound for $A_1(k)$ to
\[A_1(k) \ll 4^{2k}|E_{N, m-k}|\sum_{\substack{k=k_1+k_2\\ ~=k_1'+k_2'}}\left(\left|E_{N, m-k_1}\right|\left|E_{N, m-k_1'}\right|\left|E_{N, m-k_2}\right|\left|E_{N, m-k_2'}\right|\right)^{\half}.\]
Now we divide $A_1(k)$ by $|E_{N, m}|^3$, use Lemma~\ref{lem:EN}, and sum over $k$ to get
\begin{align*}
 &\sum_{k=1}^m\frac{A_1(k)}{\left|E_{N, m}\right|^3}
 \ll \sum_{k=1}^{m}4^{2k}\frac{|E_{N, m-k}|}{|E_{N, m}|}\sum_{\substack{k=k_1+k_2\\ ~=k_1'+k_2'}}
 \frac{\left(\left|E_{N, m-k_1}\right|\left|E_{N, m-k_1'}\right|\left|E_{N, m-k_2}\right|\left|E_{N, m-k_2'}\right|\right)^{\half}}{\left|E_{N, m}\right|^2}\\
 &\ll \sum_{k=1}^{m}k\left(\frac{4m}{\log\log N}\right)^{2k}. 
\end{align*}
So 
\[\sum_{k=1}^m\frac{A_1(k)}{\left|E_{N, m}\right|^3}\ll 1, \text{ when } m<c\log\log N, \ c<1/4.\] 

We believe that $A_1(k)$ and $A_2(k)$ are of similar size, which suggests us to conjecture that $c(3)$ (as defined in Section~\ref{sec:intro}) is $\frac{1}{4}$. 
\subsection{Computation for $A_2$}
By Lemma~\ref{lem:formula_S}  
\begin{align*}
&\S\left(\frac{N}{a_i'}, \frac{N}{b_i'}, m-\Omega(a_i'), m-\Omega(b_i'): i=1, 2\right)\\
&\ll \left\{\left|E_{\frac{N}{b_1'}, m-k_1'}\right|\left|E_{\frac{N}{b_2'}, m-k_2'}\right|\left(\left|E_{\frac{N}{a_1'}, m-k_1}\right|\left|E_{\frac{a_1'N}{b}, m-k_2}\right|+\left|E_{\frac{a_2'N}{b}, m-k_1}\right|\left|E_{\frac{N}{a_2'}, m-k_2}\right|\right)\right\}^\half.
\end{align*}
We only consider that part of the sum that involves $a_1'$, and the computation for the sum involving $a_2'$ is similar. Note that as $a_1'\leq a_2', \ b_1'\leq b_2'$ and $a_1'a_2', b_1'b_2' \leq N$. So we have 
$a_1', b_1' \leq \sqrt N$. This and Lemma~\ref{lem:EN} implies
\begin{align*}
&\frac{b}{N}\left\{\left|E_{\frac{N}{b_1'}, m-k_1'}\right|\left|E_{\frac{N}{b_2'}, m-k_2'}\right|\left|E_{\frac{N}{a_1'}, m-k_1}\right|\left|E_{\frac{a_1'N}{b}, m-k_2}\right|\right\}^\half\\
&\ll|E_{N, m-k_1'}|^\half|E_{N, m-k_1}|^\half\frac{b}{N\sqrt{a_1'b_1'}} \left|E_{\frac{N}{b_2'}, m-k_2'}\right|^\half\left|E_{\frac{a_1'N}{b}, m-k_2}\right|^\half\\
&\ll|E_{N, m-k_1'}|^\half|E_{N, m-k_1}|^\half.
\end{align*}
We can do similar computation for remaining part of $\S$ and show
\[\S\left(\frac{N}{a_i'}, \frac{N}{b_i'}, m-\Omega(a_i'), m-\Omega(b_i'): i=1, 2\right)\ll\frac{N}{b}|E_{N, m-k_1'}|^\half\left(|E_{N, m-k_1}|^\half+|E_{N, m-k_2}|^\half\right).\]
So
\begin{align*}
&A_2(k)\\
&= \sum_{\substack{ b\in E_{N, k}\\ b> \sqrt N}}\left|E_{\frac{N}{b}, m-k}\right| \sum_{\substack{ a\in E_{b, k}\\ (a, b)=1 }} \ \ 
\ \sum_{\substack{a=a_1'a_2'\\ a_1'\leq a_2'}} \ \sum_{\substack{b=b_1'b_2'\\ b_1'\leq b_2'}}\S\left(\frac{N}{a_i'}, \frac{N}{b_i'}, m-\Omega(a_i'), m-\Omega(b_i'): i=1, 2\right)\\
&\ll\sum_{\substack{ b\in E_{N, k}\\ b> \sqrt N}}\left|E_{\frac{N}{b}, m-k}\right| \sum_{\substack{ a\in E_{b, k}\\ (a, b)=1 }} \frac{N}{b} 
\ \sum_{\substack{a=a_1'a_2'\\ a_1'\leq a_2'}} \ \sum_{\substack{b=b_1'b_2'\\ b_1'\leq b_2'}}|E_{N, m-k_1'}|^\half\left(|E_{N, m-k_1}|^\half+|E_{N, m-k_2}|^\half\right).
\end{align*}
As $\sum_{\substack{a=a_1'a_2'\\ a_1'\leq a_2'}} \sum_{\substack{b=b_1'b_2'\\ b_1'\leq b_2'}}1\ll \sum_{k_1'=0}^{k}\sum_{k_1=0}^{k}{k \choose k_1'}{k \choose k_1}$, we have
\begin{align*}
&A_2(k)\\
&\ll\sum_{k_1'=0}^{k}\sum_{k_1=0}^{k}{k \choose k_1'}{k \choose k_1}|E_{N, m-k_1'}|^\half\left(|E_{N, m-k_1}|^\half+|E_{N, m-k_2}|^\half\right)\sum_{\substack{ b\in E_{N, k}\\ b> \sqrt N}}\frac{N}{b}|E_{b, k}|\left|E_{\frac{N}{b}, m-k}\right|\\
&\ll\sum_{k_1'=0}^{k}\sum_{k_1=0}^{k}{k \choose k_1'}{k \choose k_1}|E_{N, m-k_1'}|^\half\left(|E_{N, m-k_1}|^\half+|E_{N, m-k_2}|^\half\right)|E_{N, k}|\sum_{\substack{ b\in E_{N, k}\\ b> \sqrt N}}\left|E_{\frac{N}{b}, m-k}\right|.
\end{align*}
As $k_1$ and $k_2$ are symmetric, we may drop $|E_{N, m-k_2}|^\half$ from the above inequality. Further by applying Lemma~\ref{lem:sumENb} to $\sum_{\substack{ b\in E_{N, k}\\ b> \sqrt N}}|E_{\frac{N}{b}, m-k}|$, we get
\begin{align*}
A_2(k)
\ll \sum_{k_1'=0}^{k}\sum_{k_1=0}^{k}{k \choose k_1'}{k \choose k_1}|E_{N, m-k_1'}|^\half|E_{N, m-k_1}|^\half |E_{N, k}|^2(\log\log N)^{m-k}.
\end{align*}
Now divide $A_2(k)$ by $|E_{N, m}|^3$, apply Lemma~\ref{lem:EN}, and sum over $k$ to have
\begin{align*}
 \sum_{k=1}^{m}\frac{A_2(k)}{|E_{N, m}|^3}&\ll \sum_{k=1}^{m}\sum_{k_1'=0}^{k}\sum_{k_1=0}^{k}{k \choose k_1'}{k \choose k_1}\left(\frac{m}{\log\log N}\right)^{\frac{k_1'+k_1}{2}}
 \left(\frac{m^2}{\log\log N}\right)^{m-k}\\
 &\ll \sum_{k=1}^{m}\sum_{k_1'=0}^{k}\sum_{k_1=0}^{k}\frac{1}{k_1!k_1'!}\left(\frac{k^2m}{\log\log N}\right)^{\frac{k_1'+k_1}{2}}\left(\frac{m^2}{\log\log N}\right)^{m-k}.
 \end{align*}
When $m\leq C (\log\log N)^{\frac{1}{3}}$ for some constant $C>0$,
\begin{align*}
 \sum_{k=1}^{m}\frac{A_2(k)}{|E_{N, m}|^3}\ll\sum_{k=1}^{m}\sum_{k_1'=0}^{k}\sum_{k_1=0}^{k}\frac{1}{k_1!k_1'!} \frac{C^{\frac{3(k_1'+k_1)}{2}+ 2(m-k)}}{(\log\log N)^{\frac{m-k}{3}}}\ll_C 1.
\end{align*}
This completes the proof of our theorem.
 
 \subsection*{Acknowledgement}
We would like to thank Andriy Bondarenko and Kristian Seip for several insightful discussions.

\bibliographystyle{abbrv}
\bibliography{ref_6thmom}

\end{document}